\DeclareMathOperator{\image}{''}
\DeclareMathOperator{\crit}{crit}
\DeclareMathOperator{\Col}{Col}
\DeclareMathOperator{\power}{\mathcal{P}}
\newcommand{\Todorcevic}{Todor\v{c}evi\'c\xspace}
\newcommand{\GCH}{{\rm GCH}\xspace}
\newtheorem{theorem}{Theorem}
\newaliascnt{example}{theorem}
\newaliascnt{fact}{theorem}
\newaliascnt{corollary}{theorem}
\newtheorem{corollary}[corollary]{Corollary}
\newaliascnt{lemma}{theorem}
\newtheorem{lemma}[lemma]{Lemma}
\newaliascnt{claim}{theorem}
\newtheorem{claim}[lemma]{Claim}
\newaliascnt{remark}{theorem}
\newtheorem{remark}[lemma]{Remark}
\theoremstyle{definition}
\newaliascnt{definition}{theorem}
\newtheorem{definition}[definition]{Definition}
\newtheorem*{theorem*}{Theorem}
\newtheorem*{remark*}{Remark}
\newtheorem*{example*}{Example}
\newtheorem*{lemma*}{Lemma}
\newtheorem*{conjecture}{Conjecture}
\begin{document}
\title{Partial strong compactness and squares}
\author{Yair Hayut}
\email{yair.hayut@mail.huji.ac.il}
\address{School of Mathematical Sciences.
Tel Aviv University.
Tel Aviv 69978,
Israel}
\begin{abstract}
In this paper we analyze the connection between some properties of partially strongly compact cardinals: the completion of filters of certain size and instances of the compactness of $\mathcal{L}_{\kappa,\kappa}$. Using this equivalence we show that if any $\kappa$-complete filter on $\lambda$ can be extended to a $\kappa$-complete ultrafilter and $\lambda^{<\kappa} = \lambda$ then $\square(\mu)$ fails for all regular $\mu\in[\kappa,2^\lambda]$. As an application, we improve the lower bound for the consistency strength of \emph{$\kappa$-compactness}, a case which was explicitly considered by Mitchell.
\end{abstract}
\maketitle
\section{Introduction}
Strongly compact cardinals are one of the most intriguing large cardinals notions. Strongly compact cardinals were defined by Tarski (for a complete historical overwiew see \cite[Chapter 4]{Kanamori2008HigherInfinite}). While being very natural and well studied, some of their basic properties are still quite mysterious.  

Strongly compact cardinals are characterized by many different global principles. When taking local versions of those principles one obtains different, and sometimes non-equivalent, large cardinal axioms. We will use the following definitions as our versions for local strong compactness:
\begin{definition}
Let $\kappa$ be a regular cardinal and let $\lambda$ be a cardinal. 
\begin{itemize}
\item $\kappa$ has the \emph{$\lambda$-filter extension property} if any $\kappa$-complete filter $\mathcal{F}$ on $\lambda$  can be extended to a $\kappa$-complete ultrafilter.
\item \emph{$\mathcal{L}_{\kappa,\kappa}$-compactness property for languages of size $\lambda$} holds if for every language with $\lambda$ many non-logical symbols $\mathcal{L}$ and every collection $\Phi$ of $\mathcal{L}_{\kappa,\kappa}$-sentences in the language $\mathcal{L}$, if every sub-collection $\Phi'\subseteq\Phi$ of size $<\kappa$ has a model then $\Phi$ has a model.
\item A cardinal $\kappa$ is $\lambda$-strongly compact if there is a fine $\kappa$-complete ultrafilter on $P_\kappa \lambda$\footnote{In \cite[Chapter 22]{Kanamori2008HigherInfinite}, the term \emph{$\lambda$-compactness} is used to denote what we call $\lambda$-strong compactness. We prefer the more cumbersome name in order to avoid an inconsistency with the term \emph{$\kappa$-compact}, which refers to a cardinal $\kappa$ that has the $\kappa$-filter extension property.

This terminology also differs from the terminology of Bagaria and Magidor, in the paper \cite{BagariaMagidor2014}, in which it refers to the degree of completeness of the extended ultrafilters}. 
\end{itemize}
\end{definition}

We will say that a collection of formulas $\Phi$ is \emph{$\kappa$-consistent} if any subset of it of size $<\kappa$ has a model. Thus $\mathcal{L}_{\kappa,\kappa}$-compactness is the statement that $\kappa$-consistence implies consistence.

In this paper we would like to give a level-by-level analysis of those properties. Let $\lambda = \lambda^{<\kappa}$ be a cardinal. Then:
\[
\begin{matrix}
2^\lambda\text{-strong compactness} & \implies & \lambda\text{-filter extension} \\ 
\iff \mathcal{L}_{\kappa,\kappa}\text{-compactness for langauges of size }2^\lambda & \implies& \lambda\text{-strong compactness}.
\end{matrix}
\]
The first implication can be found in \cite[Proposition 4.1]{Kanamori2008HigherInfinite} and the last one is \cite[Theorem 22.17]{Kanamori2008HigherInfinite}. The second equivalence is Theorem \ref{thm: equivalent of compactness and filters} in this paper, which is the main technical result of this paper.

Following Gitik, \cite{Gitik2018}, we say that a cardinal $\kappa$ is \emph{$\kappa$-compact} if it has the $\kappa$-filter extension property. This case was explicitly considered by Mitchell in \cite{Mitchell79}, since its characterization uses only measures on $\kappa$. Mitchell asked about the possibility of existence of a cardinal $\kappa$ which is $\kappa$-compact in a model of the form $L[\mathcal{U}]$.  

Using Theorem \ref{thm: equivalent of compactness and filters} and Lemma \ref{lemma: elementary embedding}, we conclude the following:
\begin{theorem*}
If $\kappa$ has the $\kappa$-filter extension property then $\square(\kappa)$ and $\square(\kappa^+)$ fail. In particular, if $\kappa$ is $\kappa$-compact then there is an inner model with class many Woodin cardinals and class many strong cardinals.
\end{theorem*}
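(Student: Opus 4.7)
The first half of the statement should essentially be a repackaging of the paper's main $\square$-failure theorem at the smallest admissible level of $\lambda$. Since the cobounded filter on $\kappa$ is $\kappa$-complete, the $\kappa$-filter extension property immediately produces a $\kappa$-complete uniform ultrafilter on $\kappa$, so $\kappa$ is measurable and in particular $\kappa^{<\kappa} = \kappa$. Taking $\lambda = \kappa$ in Theorem \ref{thm: equivalent of compactness and filters} converts the hypothesis into $\mathcal{L}_{\kappa,\kappa}$-compactness for languages of size $2^\kappa$; the main theorem announced in the abstract then yields that $\square(\mu)$ fails for every regular $\mu \in [\kappa, 2^\kappa]$. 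Since both $\kappa$ and $\kappa^+$ lie in this interval, $\square(\kappa)$ and $\square(\kappa^+)$ fail.

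For the inner-model conclusion, the plan is to feed these $\square$-failures into standard core-model theory. By now-classical results of Jensen, Schimmerling, Steel, and Zeman, if there is no inner model with a Woodin cardinal then the Steel core model $K$ exists, satisfies sufficient forms of $\square$, and propagates them up to $V$ via weak covering. Thus the failure of $\square(\kappa^+)$ at a measurable $\kappa$ already suffices to produce an inner model with a Woodin cardinal, and an analogous dichotomy at the strong-cardinal level produces an inner model with a strong cardinal. To bootstrap from a single Woodin/strong to class many, I would use Lemma \ref{lemma: elementary embedding}: the elementary embedding it supplies (with critical point $\kappa$) reflects the statement ``there is an inner model with a Woodin cardinal above $\alpha$'' down to cofinally many $\alpha < \kappa$, and a standard amalgamation of these reflected witnesses yields a proper class of Woodin cardinals living in a fixed inner model. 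The same argument applies to strong cardinals.

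The main obstacle I expect is the amalgamation step in the second paragraph: gluing together the fine-structural witnesses for Woodins above each $\alpha$ into a single coherent proper-class inner model requires genuinely inner-model-theoretic input and cannot be carried out by the purely combinatorial tools developed earlier in the paper. By contrast, the first paragraph is bookkeeping on top of Theorem \ref{thm: equivalent of compactness and filters}, and the passage from a single failure of $\square$ at $\kappa^+$ to a single inner model with a Woodin is a direct citation of the core-model dichotomy. A possible technical subtlety is verifying that the reflection supplied by Lemma \ref{lemma: elementary embedding} is genuinely strong enough to transfer Woodin/strong cardinal witnesses (rather than just Mahlo or measurable witnesses) across the embedding; if it is not, one would have to iterate the construction by applying the whole argument inside the target model $M$ of the embedding.
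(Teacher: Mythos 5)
Your first paragraph matches the paper: measurability of $\kappa$ gives $\kappa^{<\kappa}=\kappa$, Theorem \ref{thm: equivalent of compactness and filters} converts the hypothesis into $\mathcal{L}_{\kappa,\kappa}$-compactness for languages of size $2^\kappa$, and then (via the elementary embedding of Lemma \ref{lemma: elementary embedding} and Corollary \ref{cor: reflections}) $\square(\mu)$ fails for every regular $\mu\in[\kappa,2^\kappa]$, in particular for $\mu=\kappa$ and $\mu=\kappa^+$. That part is fine.

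The second half of your proposal has a genuine gap, and it is exactly the step you flagged yourself: the ``amalgamation'' of reflected single-Woodin witnesses into one inner model with a proper class of Woodin and strong cardinals. Knowing that for cofinally many $\alpha<\kappa$ there is an inner model with a Woodin cardinal above $\alpha$ does not, by any standard argument, produce a single inner model containing class many Woodins; gluing such witnesses together is precisely the hard fine-structural content that you cannot supply with the tools of this paper, and the embedding of Lemma \ref{lemma: elementary embedding} (a set-sized $j\colon M\to N$ with critical point $\kappa$) is not the right vehicle for it. The paper avoids this entirely: the intended input is the \emph{conjunction} of the failures of $\square(\kappa)$ and $\square_\kappa$ (the latter following from the failure of $\square(\kappa^+)$, since $\square_\kappa$ implies $\square(\kappa^+)$) at the countably closed cardinal $\kappa\geq\aleph_3$, and the stacking-mice theorem of Jensen--Schimmerling--Schindler--Steel \cite{Jensen2009StackingMice} then yields \emph{directly} an inner model with a proper class of Woodin cardinals and a proper class of strong cardinals. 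This is why the theorem records the failure of both squares: your proposal never uses the failure of $\square(\kappa)$ in the inner-model step, which is a sign that the intended citation is missing. Replace your second paragraph by this single appeal to \cite{Jensen2009StackingMice} and the proof closes.
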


We remark that the $\kappa$-compactness of $\kappa$ has a similar affect on properties of cardinals up to $2^\kappa$ as $2^\kappa$-strong compactness (see Corollary \ref{cor: reflections}). Some of those implications cannot be obtained from weaker reflection principles such as stationary reflection. Instances of Rado's conjecture, which follow from $\kappa$-compactness of $\kappa$ as well, can be used to provide some of the combinatorial reflection properties of $\kappa^+$. Yet, some of the consequences of $\kappa$-compactness are unknown to follow from some instances of Rado's conjecture, for example instances of compactness of the chromatic number of graphs or simultaneous reflection of stationary sets.

The results of this paper suggest that $\kappa$-compactness is essentially a property of the cardinal $2^\kappa$ rather than of $\kappa$ itself. Indeed, large cardinal properties which are formulated in terms of ultrafilters on $\kappa$ tends to be weaker and do not entail any compactness properties at the level of $\kappa^{+}$. For example, measurability and even superstrength or $1$-extentibility of a cardinal $\kappa$ are compatible with $\square_\kappa$ (see, for example, \cite{CummingsSchimmerling2002}). Moreover, in \cite{Gitik2018}, Gitik shows that if one restricts the $\kappa$-filter extension property to some classes of natural filters, then the consistency strength drops significantly. The least large cardinal which implies the failure of squares at its successor seems to be \emph{subcompactness} (see \cite{SchimmerlingZeman}), and the results of this paper suggest that $\kappa$-compactness is related to a strong version of subcompactness. 

The paper is organized as follows. In Section \ref{section:equivalence} we prove the equivalence between levels of the filter extension property and levels of $\mathcal{L}_{\kappa,\kappa}$-compactness. We remark that bounded instances of $\mathcal{L}_{\kappa,\kappa}$-compactness are equivalent to the existence of certain elementary embeddings.
From this we conclude that many local reflection phenomena which are derived from strongly compact cardinals can be obtained from the $\kappa$-filter extension property for $\kappa$ and improve the known lower bounds for its consistency strength (see \cite{Mitchell79} and \cite{Gitik2018}). 
 
In Section \ref{section:upper bounds}, we provide some upper bounds for the consistency strength of the $\lambda$-filter extension property.
\section{Equivalence of compactness and filter extension property}\label{section:equivalence}
In this section we will demonstrate the equivalence between the $\lambda$-filter extension property and $\mathcal{L}_{\kappa,\kappa}$-compactness for languages of size $2^\lambda$, for $\lambda = \lambda^{<\kappa}$. 

The following theorem was proved for the case $\kappa = \lambda = \omega$ by Fichtenholz and Kantorovich and for the case $\kappa = \omega \leq \lambda$ by Hausdorff. Hausdorff's proof generalizes to the case of arbitrary $\kappa$, assuming $\lambda = \lambda^{<\kappa}$. 
\begin{lemma}[Fichtenholz-Kantorovich, Hausdorff]\label{lemma:hausdorff}
Let $\kappa \leq \lambda$ be infinite cardinals, $\lambda = \lambda^{<\kappa}$. There is a family $\mathcal{I} \subseteq \power(\lambda)$, $|\mathcal{I}| = 2^\lambda$ such that for every pair of disjoint collections $\mathcal{A}, \mathcal{B} \subseteq \mathcal{I}$, $|\mathcal{A}|, |\mathcal{B}| < \kappa$ and $\mathcal{A} \neq \emptyset$, 
\[\left|\bigcap_{X \in \mathcal{A}} X \setminus \bigcup_{Y \in \mathcal{B}} Y\right| = \lambda.\]
\end{lemma}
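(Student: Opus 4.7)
\emph{Proof plan.} The approach is the classical Fichtenholz--Kantorovich--Hausdorff construction, packaged so that every ``test'' of a set $X \subseteq \lambda$ uses fewer than $\kappa$ pieces of data. I build an auxiliary index set $S$ of cardinality $\lambda$, and for each $X \subseteq \lambda$ I attach a coded subset $A_X \subseteq S$; the family $\mathcal{I} = \{A_X : X \in \mathcal{P}(\lambda)\}$ will be the desired independent family, after identifying $S$ with $\lambda$ via an arbitrary bijection.

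Concretely, take
\[
S = \bigl\{(a,F) : a \in [\lambda]^{<\kappa} \text{ and } F \in [\mathcal{P}(a)]^{<\kappa}\bigr\},
\]
and for each $X \in \mathcal{P}(\lambda)$ define
\[
A_X = \{(a,F) \in S : X \cap a \in F\}.
\]
Computing $|S|$ is the first step: from $\lambda^{<\kappa} = \lambda$ and $|a| < \kappa$ one gets $|\mathcal{P}(a)| = 2^{|a|} \leq \lambda^{|a|} \leq \lambda$, so $|[\mathcal{P}(a)]^{<\kappa}| \leq \lambda^{<\kappa} = \lambda$; combined with $|[\lambda]^{<\kappa}| = \lambda$ this gives $|S| = \lambda$. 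Distinct $X$'s are already separated by the singleton pairs $(\{x\}, \{\{x\}\})$ for $x \in X \triangle Y$, so $|\mathcal{I}| = 2^\lambda$.

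For the independence condition I take disjoint $\mathcal{A} = \{A_{X_i}:i \in I\}$ and $\mathcal{B} = \{A_{Y_j}:j\in J\}$ with $\mathcal{A} \ne \emptyset$ and $|I|,|J|<\kappa$. For each pair $(i,j)$ I pick a witness $z_{ij} \in X_i \triangle Y_j$ and set $a_0 = \{z_{ij}:(i,j)\in I\times J\} \in [\lambda]^{<\kappa}$. Then, for every $a \in [\lambda]^{<\kappa}$ with $a \supseteq a_0$, the obvious choice $F_a = \{X_i \cap a : i \in I\}$ has size at most $|I|<\kappa$ and lies in $[\mathcal{P}(a)]^{<\kappa}$, so $(a,F_a) \in S$; by construction $(a,F_a)$ belongs to every $A_{X_i}$, and it lies in no $A_{Y_j}$ because $z_{ij} \in a$ witnesses $Y_j \cap a \neq X_i \cap a$ for every $i$. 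Since the number of such $a$ is $\lambda^{<\kappa} = \lambda$, I obtain $\lambda$ elements in $\bigcap \mathcal{A} \setminus \bigcup \mathcal{B}$, and the reverse inequality is trivial from $|S|=\lambda$.

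The only real obstacle is the cardinal bookkeeping in the first step: without the restriction $|F|<\kappa$ there are up to $2^\lambda$ possible $F$'s once $|a|$ is infinite, which would inflate $|S|$ past $\lambda$ and wreck the construction. The assumption $\lambda^{<\kappa}=\lambda$ is exactly what is needed to keep both $[\lambda]^{<\kappa}$ and $[\mathcal{P}(a)]^{<\kappa}$ at size $\lambda$, and after that the combinatorial verification is the standard Hausdorff argument with ``finite'' replaced by ``of size $<\kappa$''.
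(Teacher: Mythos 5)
Your proof is correct and follows essentially the same construction as the paper's: the auxiliary index set of pairs $(a,F)$ with $a$ a ${<}\kappa$-sized subset of $\lambda$, the coding $A_X=\{(a,F): X\cap a\in F\}$, and the same separation/independence verification via a large enough $a$ carrying the traces $X_i\cap a$. The only difference is that you restrict $F$ to families of size $<\kappa$ rather than allowing arbitrary $Z\subseteq\mathcal{P}(a)$ as the paper does, which is a harmless (indeed slightly cleaner) variant that makes the count $|S|=\lambda$ immediate.
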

\begin{proof}
We include a proof for this theorem for the completeness of the paper.

Clearly, if is sufficient to find an independent set in $\power(J)$ for some $|J| = \lambda$. Let \[J = \{\langle X, Z\rangle \mid X \in P_\kappa\lambda,\, Z \subseteq \power(X)\}\] 
Since $\lambda^{<\kappa} = \lambda$, we can compute: $\lambda \leq |J| \leq \lambda^{<\kappa} \cdot 2^{<\kappa} = \lambda$.

For a set $A\subseteq \lambda$, let $I(A) = \{\langle X, Z\rangle \mid A\cap X \in Z\}$. Let us show that $\mathcal{I} = \{I(A) \mid A \subseteq \lambda\}$ is as required. 

If $A\neq B$ are subsets of $\lambda$ then there is some ordinal $\gamma \in A \triangle B$. So $\langle \{\gamma\}, \{\{\gamma\}\}\rangle \in I(A) \triangle I(B)$ and in particular $I(A) \neq I(B)$. Therefore $|\mathcal{I}| = 2^\lambda$. 

Let $\mathcal{A} = \{I(A_\alpha) \mid \alpha < \rho\}\subseteq \mathcal{I}$, $\mathcal{B} = \{I(B_\beta) \mid \beta < \zeta\}\subseteq \mathcal{I}$, $\rho,\zeta < \kappa$, and $A_\alpha\neq B_\beta$ for all $\alpha, \beta$. 

We need to show that for $D = \bigcap \mathcal{A} \setminus \bigcup \mathcal{B}$, $\left|D\right| = \lambda$ (when if $\mathcal{A} = \emptyset$, we define $\bigcap \mathcal{A} = J$). Indeed, if $X\in P_\kappa\lambda$ is sufficiently large so that \[\{A_\alpha \cap X\mid \alpha < \rho\} \cap \{B_\beta \cap X \mid \beta < \zeta\} = \emptyset\] then $\langle X, \{A_\alpha \cap X\mid \alpha < \rho\}\rangle \in D$. Since there are $\lambda$ many possibilities for $X$, $|D| = \lambda$.\end{proof}
\begin{theorem}\label{thm: equivalent of compactness and filters}
Let $\kappa, \lambda$ be cardinals and assume $\lambda^{{<}\kappa} = \lambda$. $\kappa$ has the $\lambda$-filter extension property if and only if $\mathcal{L}_{\kappa,\kappa}$-compactness for languages of size $2^\lambda$ holds.
\end{theorem}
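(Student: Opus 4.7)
The plan is to prove the two implications separately. The easy direction, from $\mathcal{L}_{\kappa,\kappa}$-compactness for size $2^\lambda$ to the $\lambda$-filter extension property, is a direct theory construction. The main content is the reverse direction, which must promote the $\lambda$-filter extension property to a compactness statement about languages of size $2^\lambda$; the gap between $\lambda$ and $2^\lambda$ is bridged by Hausdorff's Lemma~\ref{lemma:hausdorff}.

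For the easy direction, given a $\kappa$-complete filter $\mathcal{F}$ on $\lambda$, I would work in a language with one constant $c$ and unary predicates $P_A$ for each $A\subseteq\lambda$ (so $2^\lambda$ non-logical symbols). The theory $\Sigma$ axiomatizes that $\{A:P_A(c)\}$ is a $\kappa$-complete ultrafilter extending $\mathcal{F}$: its axioms are $P_A(c)$ for $A\in\mathcal{F}$, the ultrafilter axiom $P_A(c)\vee P_{\lambda\setminus A}(c)$, the $\kappa$-closure $\bigwedge_{\alpha<\mu}P_{A_\alpha}(c)\to P_{\bigcap_\alpha A_\alpha}(c)$ for $\mu<\kappa$, and $\neg P_\emptyset(c)$. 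Any subtheory of size $<\kappa$ is satisfied by the canonical structure on $\lambda$ (interpreting each $P_A$ as $A$) with $c$ chosen in the intersection of the $\mathcal{F}$-sets it names, nonempty by $\kappa$-completeness. Compactness then yields a model $\mathfrak{M}$, and $U=\{A:\mathfrak{M}\models P_A(c)\}$ is the desired ultrafilter.

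For the main direction, I would first reduce by standard Henkinization to the case where $\Phi$ is a propositional $\mathcal{L}_{\kappa,\kappa}$-theory in at most $2^\lambda$ propositional variables, still $\kappa$-consistent. Applying Lemma~\ref{lemma:hausdorff} yields an independent family $\mathcal{I}=\{I_\xi:\xi<2^\lambda\}\subseteq\power(\lambda)$ indexed by the variables. To each $\mathcal{L}_{\kappa,\kappa}$-sentence $\phi$ I would associate a set $X_\phi\subseteq\lambda$ by recursion on its construction: each propositional variable is sent to its $I_\xi$, negation to complement, and $<\kappa$-conjunctions and disjunctions to the corresponding intersections and unions. The key observation is that every $\mathcal{L}_{\kappa,\kappa}$-sentence uses fewer than $\kappa$ propositional variables, which together with the $<\kappa$-independence of $\mathcal{I}$ lets me prove that $X_\phi$ is nonempty precisely when $\phi$ is satisfiable: a satisfying assignment for $\phi$ determines a partition of its $<\kappa$ variables into true and false, realized by some $\alpha<\lambda$ via Lemma~\ref{lemma:hausdorff}, and that $\alpha$ lies in $X_\phi$ by induction. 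Applying this to $\bigwedge\Phi_0$ for $\Phi_0\in P_\kappa(\Phi)$ shows $\{X_\phi:\phi\in\Phi\}$ has the $\kappa$-intersection property, so it generates a proper $\kappa$-complete filter $\mathcal{G}$ on $\lambda$. Extending $\mathcal{G}$ to a $\kappa$-complete ultrafilter $U$ via the $\lambda$-filter extension property, I define $\tau(p_\xi)=T$ iff $I_\xi\in U$; a routine induction using the $\kappa$-completeness and ultrafilter property of $U$ gives $\tau\models\phi\Leftrightarrow X_\phi\in U$, and since $X_\phi\in U$ for every $\phi\in\Phi$ the assignment $\tau$ models $\Phi$. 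Reversing the Henkinization recovers a model of the original theory.

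The main obstacle is the key lemma that $X_\phi$ is nonempty exactly when $\phi$ is satisfiable, which is equivalent to showing that $\mathcal{G}$ is proper and $\kappa$-complete. This is where Hausdorff's Lemma does the real work, converting a $2^\lambda$-sized compactness problem into a $\lambda$-sized filter extension problem by exploiting that each individual formula involves fewer than $\kappa$ variables. Once this is established, the truth-assignment induction is essentially the $\mathcal{L}_{\kappa,\kappa}$ analogue of the standard ultraproduct argument.
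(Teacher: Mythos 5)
Your proposal is correct and follows essentially the same route as the paper: the easy direction via a theory in $2^\lambda$ symbols axiomatizing a $\kappa$-complete ultrafilter extending $\mathcal{F}$, and the main direction via Henkinization to propositional $\mathcal{L}_{\kappa,\kappa}$ followed by the map $\phi\mapsto X_\phi$ into the $\kappa$-complete Boolean algebra generated by the Hausdorff independent family (the paper's $\iota$), with your key lemma that $X_\phi\neq\emptyset$ iff $\phi$ is satisfiable being exactly the paper's Claim, proved by the same partition-by-assignments argument. No gaps.
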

\begin{proof}
Using Henkin's classical construction we can reduce the problem of compactness of $\mathcal{L}_{\kappa,\kappa}$ into compactness of the propositional logic $\mathcal{L}_{\kappa,1}$ (without quantifiers, but with conjunctions and disjunctions of size $<\kappa$):

Let $\mathcal{L}$ be a language of size $2^{\lambda}$. Add for each $\mathcal{L}_{\kappa,\kappa}$-formula in the language $\mathcal{L}$ of the form $\exists_{\alpha < \rho} x_\alpha \psi(\langle x_\alpha \mid \alpha < \rho\rangle)$ a sequence of $\rho$ many constants, $\langle c^\psi_{\alpha} \mid \alpha < \rho\rangle$. We would like to interpret those constants such that 
\[\exists_{\alpha < \rho} x_\alpha \psi(\langle x_\alpha \mid \alpha < \rho\rangle) \iff \psi(\langle c^\psi_\alpha \mid \alpha < \rho\rangle).\]

By repeating the process $\kappa$ many times, we may assume that any formula that starts with an existential quantifier has corresponding constants in $\mathcal{L}$. Those constants witness the validity of the formula, is case that it is true and are of arbitrary values if it is false.  

Let us introduce an atomic propositional formula $[[\varphi]]$ for every $\mathcal{L}_{\kappa,\kappa}$-formula $\varphi$ over the language $\mathcal{L}$. Let $\Psi$ be the collection of all formulas of the form:
\begin{enumerate}
\item $[[\varphi]]$ is true if $\varphi$ is a tautology.
\item $[[\varphi]] = \neg [[\neg\varphi]]$.
\item For every $\rho < \kappa$ and every $\rho$-sequence of formulas $\langle \varphi_\alpha \mid \alpha < \rho\rangle$, \[[[\bigwedge_{\alpha < \rho} \varphi_\alpha]] = \bigwedge_{\alpha < \rho} [[\varphi_\alpha]].\]
\item For every $\rho < \kappa$ and every formula $\varphi$, \[[[\exists_{\alpha < \rho} x_\alpha \varphi(\langle x_\alpha \mid \alpha < \rho\rangle)]] = [[\varphi(\langle c^\varphi_\alpha \mid \alpha < \rho\rangle)]]\] and for every sequence of terms $\langle t_\alpha \mid \alpha < \rho\rangle$, \[[[\varphi(\langle t_\alpha \mid \alpha < \rho\rangle)]] \rightarrow [[\varphi(\langle c^\varphi_\alpha \mid \alpha < \rho\rangle)]].\] 
\end{enumerate} 

For a given evaluation of all the variables $\{[[\varphi]] \mid \varphi\text{ is a }\mathcal{L}_{\kappa,\kappa}\text{-formula}\}$, we define an equivalence relation between the terms, by $t\sim t'$ if and only if the truth value of the variable $[[t = t']]$ is true. Let us denote by $[t]_\sim$ the equivalence class of the term $t$. For every relation $R$ of arity $n$ in $\mathcal{L}$, we define $R([t_0]_\sim, \dots, [t_{n-1}]_\sim)$ if $[[R(t_0, \dots, t_{n-1})]]$ is true. For every function $F$ of arity $n$ in the language $\mathcal{L}$ we define $F([t_0]_\sim, \dots, [t_{n-1}]_\sim) = [s]_\sim$ if $[[F(t_0,\dots,t_{n-1})=s]]$ is true. 

For any $\kappa$-consistent theory $T$ over the language $\mathcal{L}$ with the logic $\mathcal{L}_{\kappa,\kappa}$, one can translate the problem of the consistency of $T$ into the problem of constructing an assignment which is consistent with the collection $\Phi$ of propositional variables $[[\varphi]]$ which consists of $\{[[\varphi]] \mid \varphi\in T\}$ and $\Psi$. Clearly, $\Phi$ is still $\kappa$-consistent and any consistent assignment for it provides a model for $T$. 

Let us focus from this point in propositional theories. Let us fix a $\kappa$-independent family in $\power(\lambda)$, $\mathcal{I} = \{A_\delta \mid \delta < 2^\lambda\}$. Such a family exists by Lemma \ref{lemma:hausdorff}. Let $\mathcal{B}\subseteq\power(\lambda)$ be the $\kappa$-complete Boolean algebra which is generated by $\mathcal{I}$. The independence of $\mathcal{I}$ is equivalent to the fact that $\mathcal{B}$ is isomorphic to the $\kappa$-complete free Boolean algebra with $2^\lambda$ generataors. 

Let us define an embedding $\iota$ from the Lindenbaum-Tarski algebra of formulas in $\mathcal{L}_{\kappa,1}$ with atoms $\langle a_\gamma \mid \gamma < 2^\lambda\rangle$ into $\mathcal{B}$ by setting $\iota(a_\gamma) = A_\gamma$ (and inductively, $\iota(\neg \varphi) = \lambda \setminus \iota(\varphi)$ and $\iota(\bigwedge_{i < \eta} \varphi_i) = \bigcap_{i < \eta} \iota(\varphi_i)$). 

For a function $s\colon \Gamma \to 2$, $\Gamma \subseteq 2^\lambda$, $|\Gamma| < \kappa$, let us define: 

\[A(s) = \left(\bigcap_{\alpha\in\Gamma,\, s(\alpha) = 1} A_\alpha\right) \setminus \left(\bigcup_{\beta\in\Gamma,\, s(\beta) = 0} A_\beta\right)\]
with the convention that $\bigcap_{A\in \emptyset} A = \lambda$ (similarly to the convention in the proof of Lemma \ref{lemma:hausdorff}). By the independence of the family $\mathcal{I}$, $A(s) \neq \emptyset$ for all $s\in\,{}^\Gamma 2,\,\Gamma\in P_\kappa 2^\lambda$. Let us define the value of $s(\varphi)$ to be the truth value of $\varphi$ after assigning for each variable $a_\gamma$ in $\Gamma$ the truth value $s(\gamma)$. This value is well defined only when $\Gamma$ contains all the indices of the variables that appear in $\varphi$. 

For a formula $\varphi$, let $\Gamma_\varphi$ denote the set of indices of variables that appear in $\varphi$.

\begin{claim}
For every $\Gamma \supseteq \Gamma_\varphi$, $|\Gamma| < \kappa$,
\[\iota(\varphi) = \bigcup \{ A(s) \mid {s\in\,{}^{\Gamma}2,\, s(\varphi) = 1}\}.\]
\end{claim}
\begin{proof}
Let us show first that for every $\Gamma \subseteq 2^\lambda$, $|\Gamma| < \kappa$, the collection \[\{A(s) \mid s \in\,{}^\Gamma2\}\] is a partition of $\lambda$. Indeed, if $s, s'$ are different assignments then there is $\gamma$ such that $s(\gamma) \neq s'(\gamma)$. In particular, $A(s) \subseteq A_\gamma$ and $A(s') \subseteq \lambda \setminus A_\gamma$, and they are disjoint. In order to show that the union of this collection is $\lambda$, let us pick $\delta \in \lambda$. Let $s_\delta\colon \Gamma\to 2$ be defined as $s_\delta(\gamma)$ is $1$ if $\delta\in A_\gamma$ and $0$ otherwise. Clearly, $\delta\in A(s_\delta)$.

Thus, we conclude that for a given formula $\varphi$, and for any $\Gamma\supseteq \Gamma_\varphi$, $|\Gamma| < \kappa$,
\[\bigcup \{ A(s) \mid {s\in\,{}^{\Gamma}2,\, s(\varphi) = 1}\} = \bigcup \{ A(s) \mid {s\in\,{}^{\Gamma_\varphi}2,\, s(\varphi) = 1}\}.\]
By induction on the complexity of the formula. For atomic formula - this is true by the definition of $\iota(a_\gamma)$. For $\varphi = \neg \psi$, and every assignment with domain $\Gamma_\psi = \Gamma_\varphi$, $s(\varphi) = 1 - s(\psi)$. 
\[\begin{matrix}
\iota(\varphi) & = &\lambda \setminus \iota(\psi) \\
& = & \lambda \setminus \big(\bigcup_{s\in\,{}^{\Gamma_\psi} 2, s(\psi) = 1} A(s)\big)\\
& = & \bigcup_{s\in\,{}^{\Gamma_\psi} 2, s(\psi) = 0} A(s)\\
& = & \bigcup_{s\in\,{}^{\Gamma_\varphi} 2, s(\varphi) = 1} A(s)
\end{matrix}
\] 
where the third equation is based on the observation above that the set of $A(s)$, where $s$ ranges over all assignments for $\Gamma$, is a partition of $\lambda$. 

For $\varphi = \bigwedge_{\alpha <\rho} \psi_\alpha$, let $\Gamma_\alpha$ be $\Gamma_{\psi_\alpha}$. Let $\Gamma = \Gamma_{\varphi} = \bigcup_{\alpha < \rho} \Gamma_\alpha$. Clearly, $|\Gamma| < \kappa$. For all $\alpha < \rho$,
\[\begin{matrix}
\iota(\psi_\alpha) &= &\bigcup\{ A(s) \mid {s \in {}^{\Gamma_\alpha}2,\, s(\psi_\alpha) = 0}\}\\
& = &\bigcup\{ A(s) \mid {s \in {}^{\Gamma}2,\, s(\psi_\alpha) = 0}\}\end{matrix}\]
and thus (since the sets $\{A(s)\mid s\in {}^\Gamma2\}$ are pairwise disjoint):
\[\begin{matrix}
\iota(\varphi) & = & \bigcap_{\alpha < \rho} \iota(\psi_\alpha) \\ 
& = & \bigcup_{s \in {}^\Gamma 2, \forall \alpha < \rho,\, s(\psi_\alpha) = 0} A(s) \\
& = & \bigcup_{s \in {}^\Gamma 2, s(\varphi) = 0} A(s)
\end{matrix}
\]

\end{proof}
In particular, if $\varphi$ is consistent (i.e.\ there is an assignment $s$ such that $s(\varphi)=1$) then $\iota(\varphi) \neq \emptyset$ and if $\varphi$ is inconsistent then $\iota(\varphi) = \emptyset$. This implies that if $\varphi, \psi$ are formulas such that $\varphi\leftrightarrow \psi$ is a tautology then $\iota(\varphi) = \iota(\psi)$ and thus $\iota$ is well defined on the Lindenbaum-Tarski Boolean algebra. 

Let $\Phi$ be a collection of $\mathcal{L}_{\kappa,1}$ propositional formulas, with variables $\{a_\delta \mid \delta < 2^{\lambda}\}$, such that any sub-collection of $<\kappa$ formulas from it has a consistent assignment. 
 
Let $\mathcal{F}$ be the $\kappa$-complete filter which is generated by $\iota(\varphi)$, $\varphi\in \Phi$. $\mathcal{F}$ is a proper filter, since any collection of $<\kappa$ many formulas from $\Phi$ is consistent. Since $\iota$ respects Boolean operations of length $<\kappa$, if $\{\varphi_\alpha \mid \alpha < \rho\}\subseteq \Phi$ and $\rho < \kappa$, then 
\[ \bigcap_{\alpha<\rho} \iota(\varphi_\alpha) = \iota\left(\bigwedge_{\alpha<\rho} \varphi_\alpha\right)\]
and since the formula $\bigwedge_{\alpha<\rho} \varphi_\alpha$ is consistent, the intersection is non-empty.

Let $\mathcal{U}\supseteq\mathcal{F}$ be a $\kappa$-complete ultrafilter. Then $\mathcal{U}$ defines an assignment on the variables $\{a_\delta \mid \delta < 2^\lambda\}$: we set $a_\delta$ to be true if and only if $A_\delta\in \mathcal{U}$. Let $S\in{}^{2^\lambda} 2$ be this assignment. Since $\mathcal{U}$ is $\kappa$-complete, by induction on the complexity of the formula, we can see that for every formula $\varphi$, $(S \restriction \Gamma_\varphi)(\varphi)$ is $1$ if and only if $\iota(\varphi)\in\mathcal{U}$.

Let us turn now to the other direction. Let us assume that $\mathcal{L}_{\kappa,\kappa}$-compactness holds for languages of size $2^\lambda$ and $\mathcal{F}$ is a $\kappa$-complete ultrafilter, one can add a constant $a_X$ for every $X \in \power(\lambda)$, and let $U$ be a unary predicate. Let $\Phi$ be the collection of formulas in the language $\mathcal{L}$ over the logic $\mathcal{L}_{\kappa,\omega}$:
\begin{enumerate}
\item $U(a_X)$ for all $X \in \mathcal{F}$.
\item $\big(\bigwedge_{i < \eta} U(a_{X_i}) \big) \rightarrow U(a_{Y})$, for all sequence $\langle X_i \mid i < \eta < \kappa\rangle$ of subsets of $\lambda$, where $Y \supseteq \bigcap X_i$.
\item $U(a_X) \leftrightarrow \neg U(a_{\lambda \setminus X})$.
\end{enumerate}
Clearly, any model of $\Phi$ will define a $\kappa$-complete ultrafilter that extends $\mathcal{F}$.
\end{proof}

The following lemma is a generalization of the characterization of weakly compact cardinals using elementary embeddings, as in Hauser, \cite{Hauser1991}. The proof is a direct modification of the same proof for weakly compact cardinal (see for example \cite[Theorem 4.5]{Kanamori2008HigherInfinite}).
\begin{lemma}\label{lemma: elementary embedding}
Let $\kappa \leq \lambda$ be uncountable cardinals and let us assume that $\mathcal{L}_{\kappa,\kappa}$-compactness holds for languages of size $\lambda^{<\kappa}$. Then, for every transitive model $M$, $|M| = \lambda^{<\kappa}$, $^{{<}\kappa}M\subseteq M$, $\lambda\subseteq M$, there is a transitive model $N$ and an elementary embedding $j\colon M \to N$ such that:
\begin{enumerate}
\item $\crit j = \kappa$, and in particular $j$ is ${<}\kappa$-continuous.
\item There is $s\in N$, $j\image \lambda \subseteq s$, $|s|^N < j(\kappa)$. 
\end{enumerate}
\end{lemma}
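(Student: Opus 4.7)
The plan is to apply the $\mathcal{L}_{\kappa,\kappa}$-compactness hypothesis to a theory built from the elementary diagram of $M$ together with one new constant $c$ that will be forced to play the role of the bounding set $s$, and then to Mostowski-collapse the resulting model to obtain $N$ and the embedding $j$.

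Work in the language $\mathcal{L}$ consisting of $\in$, a constant $\bar a$ for every $a \in M$, and one extra constant $c$; this has size $|M| = \lambda^{<\kappa}$. Let $T$ be the theory whose axioms are:
\begin{enumerate}
\item the full $\mathcal{L}_{\kappa,\kappa}$-elementary diagram of $M$;
\item $\bar\alpha \in c$ for every $\alpha < \lambda$;
\item a single sentence saying ``there exist $\beta \in \bar\kappa$ and a bijection $f\colon \beta \to c$'';
\item the well-foundedness axiom $\neg \exists \langle x_n \rangle_{n<\omega}\, \bigwedge_{n<\omega} x_{n+1} \in x_n$, which is a genuine $\mathcal{L}_{\kappa,\kappa}$-sentence precisely because $\kappa$ is uncountable.
\end{enumerate}
For $\kappa$-consistency, any $T_0 \subseteq T$ with $|T_0| < \kappa$ can be modelled by $M$ itself, interpreting $\bar a$ as $a$ and $c$ as the set $A$ of ordinals appearing in axioms of type (2) in $T_0$: this $A$ has size $<\kappa$, lies in $M$ by ${}^{<\kappa}M \subseteq M$ (so its $M$-cardinality is below $\kappa$), and the transitive $M$ obviously has no descending $\in$-sequences.

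Applying $\mathcal{L}_{\kappa,\kappa}$-compactness for languages of size $\lambda^{<\kappa}$ produces a model $\mathcal{N}$ of $T$. Axiom (4) makes $\mathcal{N}$ well-founded, so let $N$ be its Mostowski collapse and define $j(a)$ as the collapse of $\bar a^{\mathcal{N}}$; axiom (1) makes $j\colon M \to N$ elementary. Setting $s := j(c)$, axioms (2) and (3) give $j\image\lambda \subseteq s$ and $|s|^N < j(\kappa)$, which is clause (2) of the conclusion. For the critical point, axiom~(1) includes each sentence $\forall x (x \in \bar\alpha \to \bigvee_{\beta<\alpha} x = \bar\beta)$ for $\alpha < \kappa$, and a routine induction on $\alpha < \kappa$ using this together with the transitivity of $N$ yields $j(\alpha) = \alpha$, hence $\crit j \geq \kappa$. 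Combined with $|s|^N \geq |s| \geq |j\image\lambda| = \lambda \geq \kappa$ and $|s|^N < j(\kappa)$, this forces $j(\kappa) > \kappa$, so $\crit j = \kappa$.

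The main obstacle is ensuring that the model returned by compactness is well-founded so that the Mostowski collapse applies: this is the sole purpose of axiom (4), and it is also the unique place in the argument where the full strength of $\mathcal{L}_{\kappa,\kappa}$ (rather than $\mathcal{L}_{\kappa,\omega}$) is genuinely needed. Everything else is a direct transcription of Hauser's characterization of weakly compact cardinals into this slightly larger setting.
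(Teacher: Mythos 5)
Your proof is correct and follows essentially the same route as the paper: apply $\mathcal{L}_{\kappa,\kappa}$-compactness to the elementary diagram of $M$ augmented by constants forcing a small cover of $j\image\lambda$, then take the Mostowski collapse and read off $j$ from the interpretations of the constants $\bar a$. The only harmless local differences are that you make the well-foundedness axiom explicit (the paper leaves it implicit in the $\mathcal{L}_{\kappa,\kappa}$-diagram of the well-founded $M$), and you deduce $j(\kappa)>\kappa$ from $|s|^N\geq\lambda\geq\kappa$ rather than from the paper's extra constant $d$ sandwiched between all $\alpha<\kappa$ and $\kappa$.
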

\begin{proof}
Let $\mathcal{L}$ be a language with a constant symbol $c_x$ for all $x\in M$, two additional constants $d, s$ and a binary relation $E$.

Let us consider the set of formulas that consists of all $\mathcal{L}_{\kappa,\kappa}$-elementary diagram of $M$ (using the constants $c_x$ and membership as $E$). Let us add the formulas:
\begin{enumerate}
\item For all $\alpha < \kappa$, we add the formula $c_\alpha E d$. 
\item We add the formula $d E c_\kappa$.
\item For all $\alpha < \lambda$, we add the formula $c_\alpha E s$.
\item We add the formula $|s| < c_\kappa$ (interpreted with the standard set theoretical meaning).
\end{enumerate}
Note that any collection of $<\kappa$ many formulas has a model. Namely, take $s$ to be a set of cardinality $<\kappa$ that contains all ordinals $\alpha$ such that the formula ``$c_\alpha \in s$'' appears in the collection, and take $d$ to some arbitrary ordinal below $\kappa$ which is larger than all ordinals smaller than $\kappa$ that were mentioned in the collection.

Thus, $\Phi$ is $\kappa$-consistent and therefore it has a model. The membership relation of this model, $E$, is well founded since $M$ is well founded. Let $N$ be the transitive collapse of the obtained model. Each element of $M$, $x$ has a corresponding constant in the language, $c_x$. Let $j(x)$ be $c_x^N$: the member of $N$ which is evaluated as $c_x$. The embedding $j$ is elementary, since the elementary diagram of $M$ was included in $\Phi$. 

The critical point of $j$ is at least $\kappa$, since for all $\alpha < \kappa$, the assertion ``$x E c_\alpha \implies \bigvee_{\beta < \alpha} x = c_\beta$'' is $\mathcal{L}_{\kappa,\kappa}$ sentence that appears in the elementary diagram of $M$. Thus, in $N$, there is no new ordinal below $\kappa$. But clearly, $d$ is evaluated as a new ordinal below $j(\kappa)$, and thus $\kappa = \crit j$. 
   
The existence of $s$ is clear by construction. The continuity of $j$ follows from the closure of $M$ under sequences of length $<\kappa$. \end{proof}

\begin{corollary}\label{cor: filter extension to elementary embedding}
Let $\kappa \leq \lambda$ be uncountable cardinals, $\lambda^{<\kappa} = \lambda$. $\kappa$ has the $\lambda$-filter extension property if and only if for every transitive model $M$, $2^\lambda \subseteq M$, $|M| =2^\lambda$, there is a transitive model $N$ and an elementary embedding $j\colon M \to N$, such that $\crit j = \kappa$ and there is $s\in N$, $|s|^N < j(\kappa)$, $j\image 2^\lambda \subseteq s$.
\end{corollary}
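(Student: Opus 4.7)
The plan is to reduce the corollary to Theorem~\ref{thm: equivalent of compactness and filters} and Lemma~\ref{lemma: elementary embedding}, invoking the latter at $\lambda':=2^\lambda$; the cardinal arithmetic identity $(2^\lambda)^{<\kappa}=2^\lambda$ holds since $\kappa\leq\lambda$.

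\emph{Forward direction.} Assume $\kappa$ has the $\lambda$-filter extension property. Theorem~\ref{thm: equivalent of compactness and filters} then gives $\mathcal{L}_{\kappa,\kappa}$-compactness for languages of size $2^\lambda$. Given a transitive $M$ as in the corollary, I would observe that the closure hypothesis $^{<\kappa}M\subseteq M$ in Lemma~\ref{lemma: elementary embedding} is only used there to derive the continuity of $j$, which is not demanded by the corollary; so the argument of that lemma, run at $\lambda'=2^\lambda$, already produces the desired $j\colon M\to N$ with $\crit j=\kappa$ and $s\in N$, $j\image 2^\lambda\subseteq s$, $|s|^N<j(\kappa)$. (Alternatively one can first enlarge $M$ to an $M^*$ of cardinality $2^\lambda$ closed under ${<}\kappa$-sequences, apply the lemma, and restrict.)

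\emph{Backward direction.} Let $\mathcal{F}$ be a $\kappa$-complete filter on $\lambda$. I would choose a transitive $M$ of cardinality $2^\lambda$ with $2^\lambda\subseteq M$, closed under ${<}\kappa$-sequences, containing $\mathcal{F}$, $\power(\lambda)$, and a fixed bijection $f\colon 2^\lambda\to\power(\lambda)$. The hypothesis yields $j\colon M\to N$ and $s\in N$ as above. The aim is to produce a seed $\eta_0\in j(\lambda)$ with $\eta_0\in j(X)$ for every $X\in\mathcal{F}$; the derived ultrafilter $\mathcal{U}:=\{X\in\power(\lambda):\eta_0\in j(X)\}$ will then be a $\kappa$-complete ultrafilter extending $\mathcal{F}$ by the standard argument, using $\crit j=\kappa$ and closure of $M$ to handle ${<}\kappa$-intersections. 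To produce $\eta_0$, work inside $N$: the map $j(f)$ is a bijection from $j(2^\lambda)$ onto $\power(j(\lambda))^N$ and $j(\mathcal{F})$ is a $j(\kappa)$-complete filter. For each $X\in\mathcal{F}$, writing $X=f(\alpha)$ gives $j(\alpha)\in s\cap j(f^{-1}\mathcal{F})$ and $j(X)=j(f)(j(\alpha))$, so the externally indexed family $\{j(X):X\in\mathcal{F}\}$ is contained in the $N$-set $T:=j(f)\image\bigl(s\cap j(f^{-1}\mathcal{F})\bigr)\subseteq j(\mathcal{F})$, which has $N$-cardinality $<j(\kappa)$. The $j(\kappa)$-completeness of $j(\mathcal{F})$ in $N$ then makes $\bigcap T$ nonempty, and any element of it is a valid $\eta_0$.

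The main obstacle is exactly this last step: a priori $\{j(X):X\in\mathcal{F}\}$ is not a set of $N$, and the role of the strengthened hypothesis $j\image 2^\lambda\subseteq s$ (rather than just $j\image\lambda\subseteq s$) is precisely to absorb it, via $f$, into an $N$-set small enough to be handled by the $j(\kappa)$-completeness of $j(\mathcal{F})$.
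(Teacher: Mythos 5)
Your proof is correct and follows essentially the same route as the paper: the forward direction is Theorem~\ref{thm: equivalent of compactness and filters} combined with Lemma~\ref{lemma: elementary embedding} applied at $2^\lambda$, and the backward direction is the standard seed argument, deriving $\mathcal{U}$ from an element of $\bigcap S$ for an $N$-set $S\subseteq j(\mathcal{F})$ of $N$-cardinality $<j(\kappa)$. Your explicit use of a bijection $f\colon 2^\lambda\to\power(\lambda)$ in $M$ to convert $j\image 2^\lambda\subseteq s$ into such a cover of $j\image\mathcal{F}$ is exactly the detail the paper leaves implicit in the sentence ``in $N$, $j\image\mathcal{F}$ is covered by a set of size $<j(\kappa)$.''
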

\begin{proof}
The first direction follows from Theorem \ref{thm: equivalent of compactness and filters} and Lemma \ref{lemma: elementary embedding}. The other direction is obtained in exactly same way as in the standard proof of the filter extension property from the assumption of strong compactness: 

Let $\mathcal{F}$ be a $\kappa$-complete filter and let $M$ be sufficiently nice transitive model such that $\mathcal{F} \in M$, and $2^\lambda, \power(\lambda)\subseteq M$.  In $N$, $j\image \mathcal{F}$ is covered by a set of size $<j(\kappa)$, $S$. Without loss of generality, $S \subseteq j(\mathcal{F})$. Therefore, $\bigcap S \neq \emptyset$. Let us pick any $t\in\bigcap S$ and let us define $\mathcal{U} = \{X \subseteq \lambda \mid t\in j(X)\}$. $\mathcal{U}$ is clearly $\kappa$-complete ultrafilter that extends $\mathcal{F}$.
\end{proof}

Using the elementary embedding from Lemma \ref{lemma: elementary embedding}, one can obtain from the $\lambda$-filter extension property many of the standard results of partial strongly compact cardinal (see \cite{Solovay1974}).
\begin{corollary}\label{cor: reflections}
Let $\kappa \leq \lambda$ and let us assume that the $\lambda$-filter extension property holds and $\lambda^{<\kappa} = \lambda$. Let $\mu \in [\kappa, 2^\lambda]$ be a regular cardinal.
\begin{enumerate}
\item Every collection of $<\kappa$ many stationary subsets of $S^{\mu}_{<\kappa}$ has a common reflection point. 
\item $\square(\mu, <\kappa)$ fails.
\item If $G$ is a graph of size $\mu$, and $\rho < \kappa$ is a cardinal such that every subgraph of size $<\kappa$ of $G$ has chromatic color at most $\rho$, then $G$ has chromatic color at most $\rho$.
\end{enumerate}
\end{corollary}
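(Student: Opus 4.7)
Take a transitive model $M$ of size $2^\lambda$ with $2^\lambda \subseteq M$ and ${}^{{<}\kappa}M \subseteq M$, containing $\mu$ together with all relevant objects (the stationary sequence in (1), the $\square$-sequence in (2), the graph in (3)). By \autoref{cor: filter extension to elementary embedding}, fix an elementary embedding $j\colon M \to N$ with $\crit j = \kappa$ and $s \in N$ such that $j\image 2^\lambda \subseteq s$ and $|s|^N < j(\kappa)$. Set $\delta := \sup j\image \mu$. Since $j(\mu)$ is regular in $N$, we have $\mu \leq \delta < j(\mu)$ and $\cof^N(\delta) \leq |s|^N < j(\kappa)$; moreover, ${<}\kappa$-closure of $M$ makes $j$ continuous at ordinals of cofinality ${<}\kappa$.

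The technical backbone for parts (1) and (2) is the following observation. For every $N$-club $D \subseteq \delta$, the set $E_D := \{\beta < \mu : \sup j\image \beta \in D\}$ is a $V$-club in $\mu$. Unboundedness follows by $\omega$-interleaving: given $\beta_0 < \mu$, alternately choose $\gamma_n \in D$ above $j(\beta_n)$ and $\beta_{n+1} < \mu$ with $j(\beta_{n+1}) > \gamma_n$; then $\beta^* := \sup_n \beta_n$ has $\cof(\beta^*) = \omega$, continuity yields $j(\beta^*) = \sup_n \gamma_n$, and this sup lies in $D$ because it is a limit of points of $D$ and $D$ is closed. Closedness of $E_D$ at points of cofinality ${<}\kappa$ uses the same continuity.

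For (1), apply the backbone to arbitrary $N$-clubs $D \subseteq \delta$: stationarity of $S_\alpha$ in $V$ yields $\beta \in E_D \cap S_\alpha$, and then $j(\beta) = \sup j\image \beta \in D \cap j(S_\alpha)$, so $j(S_\alpha) \cap \delta$ is stationary in $\delta$ in $N$ for each $\alpha < \rho$. To promote the resulting reflection to $V$, argue by contradiction: if no common $V$-reflection point existed, use choice to pick a function $w$ on $\{\beta < \mu : \cof(\beta) < \kappa\}$ sending $\beta$ to $(\alpha(\beta), C_\beta)$ with $C_\beta \subseteq \beta$ a $V$-club disjoint from $S_{\alpha(\beta)}$, and include $w$ in $M$. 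Then elementarity applied at $\delta$ (which satisfies $\delta < j(\mu)$ and $\cof^N(\delta) < j(\kappa)$) produces $j(w)(\delta) = (\alpha^*, D)$ with $\alpha^* < j(\rho) = \rho$ and $D \subseteq \delta$ an $N$-club disjoint from $j(S_{\alpha^*})$, contradicting the $N$-stationarity of $j(S_{\alpha^*}) \cap \delta$.

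For (2), suppose $\vec C$ is a $\square(\mu, {<}\kappa)$-sequence in $M$ and pick any $C^* \in j(\vec C)_\delta$. The same $\omega$-interleaving shows $A := \{\alpha < \mu : j(\alpha) \in \lim C^*\}$ is cofinal in $\mu$. For $\alpha \in A$, the bound $|\mathcal{C}_\alpha| < \kappa$ gives $j(\mathcal{C}_\alpha) = j\image \mathcal{C}_\alpha$, so coherence in $N$ picks out a unique $C^\alpha \in \mathcal{C}_\alpha$ with $j(C^\alpha) = C^* \cap j(\alpha)$; for $\alpha_1 < \alpha_2$ in $A$, elementarity and injectivity of $j$ give $C^{\alpha_2} \cap \alpha_1 = C^{\alpha_1}$. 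Set $T := \bigcup_{\alpha \in A} C^\alpha$, so $T \cap \alpha = C^\alpha \in \mathcal{C}_\alpha$ for $\alpha \in A$. The main obstacle is proving $\lim T \cap \mu \subseteq A$ (which gives closedness of $T$ and the thread property at all limit points): if $\beta \in \lim T \cap \mu$ but $\beta \notin A$, then $\beta \notin \lim C^{\alpha'}$ for any $\alpha' \in A$ with $\alpha' > \beta$, so $C^{\alpha'} \cap \beta$ is a common (independent of $\alpha'$ by coherence) bounded subset of $\beta$; together with the fact that $A \cap \beta$ is then also bounded (otherwise $\beta \in \lim C^{\alpha'}$), this forces $T \cap \beta$ bounded, contradicting $\beta \in \lim T$. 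Hence $T$ is a thread, contradicting the $\square$-assumption. Part (3) is immediate: the subgraph of $j(G)$ induced on $j\image \mu \subseteq s$ has $N$-size below $j(\kappa)$, so by elementarity it is $j(\rho) = \rho$-chromatic, witnessed by some $c \in N$, and $\tilde c(v) := c(j(v))$ is a proper $\rho$-coloring of $G$ in $V$ since adjacency is preserved by elementarity.
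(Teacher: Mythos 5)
The paper gives no proof of this corollary beyond pointing to the elementary embeddings of Lemma~\ref{lemma: elementary embedding}/Corollary~\ref{cor: filter extension to elementary embedding} and the standard arguments of Solovay; your write-up is a correct instantiation of exactly that intended approach (reflect to $\delta=\sup j\image\mu$ using $\cof^N(\delta)\le|s|^N<j(\kappa)$ and ${<}\kappa$-continuity, then pull back). The details you supply — the club $E_D$, the non-reflection witness function $w$, the thread $T=\bigcup_{\alpha\in A}C^\alpha$ built from $j(\mathcal{C}_\alpha)=j\image\mathcal{C}_\alpha$, and the pulled-back coloring — are the standard ones and check out.
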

Let us focus in the case of $\lambda = \kappa$ (the case of $\kappa$-compactness). 
In \cite{Mitchell79}, Mitchell asked what is the consistency strength of the existence of an uncountable  cardinal $\kappa$ that has the $\kappa$-filter extension property. Mitchell conjectured that the consistency strength of this property is in the realm of $o(\kappa)=\kappa^{++}$. In the paper \cite{Gitik2018}, Gitik coined the term \emph{$\kappa$-compact} for this property and investigate many aspects of it. In particular, he showed that counter-intuitively, $\kappa$-compactness is much stronger that $o(\kappa) = \kappa^{++}$. Indeed, he showed that if $\kappa$ is $\kappa$-compact then there is an inner model with a Woodin cardinal. In Corollary \ref{cor:consistency strength} ahead we will improve Gitik's lower bound. 

Note that any of the assertions in Corollary \ref{cor: reflections} implies the failure of $\square(\mu)$ for any regular cardinal $\mu$ in the interval $[\kappa, 2^\kappa]$, and in particular the failure of $\square(\kappa^+)$, if $\kappa$ is $\kappa$-compact. This will be used ahead in Corollary \ref{cor:consistency strength} in order to derive a significant consistency strength from the assumption of $\kappa$-compactness:
\begin{corollary}\label{cor:consistency strength}
If $\kappa$ is $\kappa$-compact then there is an inner model with a proper class of strong cardinals and a proper class of Woodin cardinals.
\end{corollary}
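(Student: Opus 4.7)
The plan is to assemble the corollary from results already proved in the paper together with a black-box invocation of core model theory.

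First, I would observe that a $\kappa$-compact cardinal is measurable: the filter of cobounded subsets of $\kappa$ is $\kappa$-complete (since $\kappa$ is regular), so by the $\kappa$-filter extension property it extends to a $\kappa$-complete nonprincipal ultrafilter on $\kappa$. In particular $\kappa$ is strongly inaccessible, so $\kappa^{<\kappa}=\kappa$, and the hypotheses of \autoref{cor: reflections} are satisfied with $\lambda=\kappa$. Applying that corollary at the regular cardinals $\kappa$ and $\kappa^+$, both of which lie in the interval $[\kappa,2^\kappa]$, yields the failure of $\square(\kappa,{<}\kappa)$ and $\square(\kappa^+,{<}\kappa)$. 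Since $\square(\mu)$ implies $\square(\mu,{<}\kappa)$ for every $\mu\geq\kappa$, this gives the simultaneous failure of $\square(\kappa)$ and $\square(\kappa^+)$ in $V$, establishing the first sentence of the theorem stated in the introduction.

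Next I would invoke the appropriate level of the core model induction. The canonical inner model $K$ constructed below ``there is a proper class of strong cardinals together with a proper class of Woodin cardinals'' satisfies $\square_\nu$ at every uncountable cardinal $\nu$ of $K$, and hence $\square(\nu^+)$ at every such $\nu$. By weak covering at this level one has $(\kappa^+)^K=\kappa^+$, so a $\square(\kappa^+)$-sequence constructed in $K$ remains a $\square(\kappa^+)$-sequence in $V$. This contradicts the failure of $\square(\kappa^+)$ obtained in the first step. Therefore $K$ cannot be defined up to this level, and by the usual dichotomy of the core model programme this is equivalent to the existence of an inner model with a proper class of strong cardinals and a proper class of Woodin cardinals, giving the stated corollary.

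The main obstacle is not combinatorial but expository: one must cite the correct level of the core model theory. Gitik's earlier bound of a single Woodin already uses this strategy at the one-Woodin level; the improvement here is that we have \emph{two} simultaneous square failures (indeed, failures throughout $[\kappa,2^\kappa]$), which rule out every core model short of the stated large-cardinal strength, leveraging the results on squares in $K$ due to Schimmerling--Zeman, Jensen--Steel and Sargsyan. Once that black box is granted, the proof is a one-line application of \autoref{cor: reflections} together with the fact that $\kappa$-compactness delivers measurability and hence the cardinal arithmetic $\kappa^{<\kappa}=\kappa$ needed to apply it.
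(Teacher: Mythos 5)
Your proposal is correct and follows essentially the same route as the paper: apply \autoref{cor: reflections} (with $\lambda=\kappa$, using that $\kappa$-compactness gives measurability and hence $\kappa^{<\kappa}=\kappa$) to get the simultaneous failure of $\square(\kappa)$ and $\square(\kappa^+)$ (hence of $\square_\kappa$), and then cite the Jensen--Schimmerling--Schindler--Steel ``Stacking mice'' theorem for countably closed $\kappa\geq\aleph_3$. The only caveat is that your sketch of the inner-model black box (weak covering plus squares in $K$) is not the actual mechanism of that theorem, which genuinely needs both square failures to run its stacking argument; but since you invoke the result as a black box with the correct hypotheses, this does not affect correctness.
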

\begin{proof}
By Corollary \ref{cor: reflections}, if $\kappa$ is $\kappa$-compact, then $\square(\kappa)$ and $\square(\kappa^{+})$ both fail. By \cite{Jensen2009StackingMice}, the failure of $\square(\kappa)$ together with $\square_\kappa$ for a countably closed cardinal $\kappa\geq \aleph_3$ implies the existence of an inner model with a proper class of Woodin cardinals and a proper class of strong cardinals.
\end{proof}

One can use the characterization in Corollary \ref{cor: filter extension to elementary embedding} in order to obtain consistency results from the assumption of $\kappa$-compactness which are usually obtained by using $\kappa^{+}$-strongly compact cardinals. 

For example, let us consider a bounded variant of Rado's conjecture. Let us say that a tree $T$ is \emph{$\mu$-special} if it is a union of at most $\mu$ many antichains. Clearly, if a tree is $\mu$-special then its height is at most $\mu^+$, and any tree with height $<\mu^+$ is $\mu$-special. Thus, in this context we are interested only in trees of height $\mu^+$.  

Let us use Fuchino's notations from \cite{Fuchino2017}: $RC(\mu, <\kappa, \lambda)$ is the assertion that a tree $T$ (of height $\mu^+$), with $|T| = \lambda$ is $\mu$-special if and only if every sub-tree $T' \subseteq T$ of size $<\kappa$ is $\mu$-special. 

By Corollary \ref{cor: reflections}, if $\kappa$ is $\kappa$-compact then $RC(\rho,<\kappa,\kappa^+)$ holds for all $\rho < \kappa$. By applying exactly the same proof as in \cite{Todorcevic1983}, but replacing the supercompact (or strongly compact) embedding with the elementary embeddings from Lemma \ref{lemma: elementary embedding}, one can force $\kappa$ to be $\omega_2$ and obtain the consistency of local version of Rado's conjecture from $\kappa$-compactness. Note that the elementary embedding which is used in the proof depends on the non-special tree which we prove that has a non-special small sub-tree.
\begin{remark}
Let $\kappa$ be $\kappa$-compact. The L\'{e}vy collapse $\Col(\omega_1,<\kappa)$, forces that $RC(\aleph_0, <\aleph_2, \aleph_3)$ holds.
\end{remark}
By the arguments of \cite[Theorem 10]{Todorcevic1993}, $RC(\mu, <\kappa, \kappa^+)$ already implies the failure of $\square(\kappa^+)$. It is unclear how weaker square principles are affected by Rado's conjecture. In \cite{PerezWu2017}, P\'{e}rez and Wu show that Rado's conjecture implies the failure of $\square(\lambda, \omega)$ for all regular $\lambda > \omega_1$. Nevertheless, it is unclear whether the conjunction $\bigwedge_{\mu < \kappa} RC(\mu, <\kappa, \kappa^+)$ implies the failure of $\square(\kappa^+,\omega_1)$.

In \cite{Zhang2017}, Zhang investigates a weaker variant of Rado's conjecture, $RC^b$. This principle is still sufficiently strong in order to imply stationary reflection for subsets of cofinality $\omega$ and the failure of $\square(\mu)$ for all regular $\mu > \omega_1$. Zhang shows that this version does not imply simultaneous stationary reflection for pairs for stationary subsets of $S^{\omega_2}_\omega$. It is still open whether Rado's conjecture implies any instance of simultaneous stationary reflection.

\section{Consistency strength}\label{section:upper bounds}
Let us start with a definition of large cardinal notion in the realm of supercompactness that plays an important role in the analysis of the filter extension property. 
\begin{definition}
Let $\kappa \leq \lambda$ be cardinals. $\kappa$ is $\lambda$-$\Pi^1_1$-subcompact if for every $A\subseteq H(\lambda)$ and every $\Pi^1_1$-statement $\Phi$, if $\langle H(\lambda), \in, A\rangle \models \Phi$ then there is $\rho < \kappa$, $\bar{\lambda} < \kappa$ and $B\subseteq H(\bar\lambda)$ such that:
\[\langle H(\bar\lambda),\in B\rangle \models \Phi\] 
and there is an elementary embedding:
\[j\colon \langle H(\bar\lambda), \in, B\rangle \to \langle H(\lambda), \in, A\rangle\]
with critical point $\rho$, such that $j(\rho) = \kappa$. 
\end{definition}
For the case $\kappa=\lambda$, since $\kappa\notin H(\lambda)$, we require that $j = id$. Thus, a cardinal $\kappa$ is $\kappa$-$\Pi^1_1$-subcompact if and only if it is weakly compact.

The important case $\lambda = \kappa^+$ was introduced by Neeman and Steel in \cite[Section 1]{NeemanSteelSubcompact}. In their paper this type of cardinal is called \emph{$\Pi^2_1$-subcompact}. 

The following theorem improves slightly \cite[Theorem 1.1]{Gitik2018}.
\begin{theorem}
Let $\kappa \leq \lambda$ be $\lambda^+$-$\Pi^1_1$-subcompact cardinal. Then $\kappa$ has the $\lambda$-filter extension property. In particular, any $\kappa^+$-$\Pi^1_1$-subcompact cardinal is $\kappa$-compact.
\end{theorem}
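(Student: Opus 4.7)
The plan is to argue by contradiction. I would fix a $\kappa$-complete filter $\mathcal{F}$ on $\lambda$ and assume it has no $\kappa$-complete ultrafilter extension. Since $\mathcal{F} \subseteq \mathcal{P}(\lambda) \subseteq H(\lambda^+)$, I can view $\mathcal{F}$ as a unary predicate on $H(\lambda^+)$. Let $\Phi$ be the $\Pi^1_1$ sentence with parameters $\kappa, \lambda$ asserting: ``the distinguished predicate is a $\kappa$-complete filter on $\lambda$, and no $U \subseteq \mathcal{P}(\lambda)$ is a $\kappa$-complete ultrafilter on $\lambda$ extending it'' --- the nonexistence clause is the universal second-order quantifier. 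By assumption, $\langle H(\lambda^+),\in,\mathcal{F}\rangle \models \Phi$.

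Applying $\lambda^+$-$\Pi^1_1$-subcompactness of $\kappa$, I obtain $\rho, \bar\lambda < \kappa$, a predicate $B \subseteq H(\bar\lambda)$ and an elementary embedding
\[
j\colon \langle H(\bar\lambda),\in,B\rangle \to \langle H(\lambda^+),\in,\mathcal{F}\rangle
\]
with $\crit j = \rho$ and $j(\rho) = \kappa$, such that $\Phi$ holds in the smaller model. Setting $\bar\lambda_0 := j^{-1}(\lambda) < \bar\lambda$, elementarity gives $B = \{x \in H(\bar\lambda) : j(x) \in \mathcal{F}\}$ as a $\rho$-complete filter on $\bar\lambda_0$. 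The reflected $\Pi^1_1$ clause then says that no subset of $\mathcal{P}(\bar\lambda_0)$ in $V$ is a $\rho$-complete ultrafilter extending $B$.

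To contradict this, I would build such an ultrafilter directly via a Solovay-style seed. A $\lambda^+$-$\Pi^1_1$-subcompact cardinal is in particular weakly compact, hence inaccessible, so $|H(\bar\lambda)| < \kappa$ and consequently $|\mathcal{F} \cap \range(j)| < \kappa$. By $\kappa$-completeness of $\mathcal{F}$, the set $\bigcap(\mathcal{F} \cap \range(j))$ is a nonempty subset of $\lambda$; fix a seed $t$ in it, and define
\[
\mathcal{U}_t := \{X \in \mathcal{P}(\bar\lambda_0) : t \in j(X)\}.
\]
Using $j(\bar\lambda_0) = \lambda$ together with the fact that $j$ commutes with Boolean operations of size below $\rho = \crit j$, a routine verification shows $\mathcal{U}_t$ is a $\rho$-complete ultrafilter on $\bar\lambda_0$. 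Moreover, $\mathcal{U}_t$ extends $B$: for $X \in B$, $j(X) \in \mathcal{F} \cap \range(j)$, so $t \in j(X)$ and $X \in \mathcal{U}_t$. This ultrafilter lies in $V$, contradicting the reflected assertion.

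The main delicate point I anticipate is keeping the semantics of $\Pi^1_1$-satisfaction at the reflected level unambiguous, namely that the second-order quantifier in the reflected $\Phi$ genuinely ranges over all $V$-subsets of $\mathcal{P}(\bar\lambda_0)$, so that the concrete ultrafilter $\mathcal{U}_t \in V$ really serves as a counterexample. Once this bookkeeping is in place, the rest is a direct adaptation of the Solovay seed construction for $\lambda$-strongly compact cardinals, with the subcompactness embedding $j$ in place of the usual ultrapower-style embedding.
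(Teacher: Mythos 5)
Your proof is correct and takes essentially the same route as the paper: reflect the $\Pi^1_1$ statement asserting that $\mathcal{F}$ has no $\kappa$-complete ultrafilter extension, note that the pointwise image of the reflected filter has size $<\kappa$ so that $\kappa$-completeness of $\mathcal{F}$ yields a seed $t$, and derive from $t$ a $\rho$-complete ultrafilter extending the reflected filter, contradicting the reflected statement. The only cosmetic difference is that you bound the image by $|H(\bar\lambda)|<\kappa$ where the paper counts $2^{\bar\lambda}<\kappa$; both rest on the inaccessibility of $\kappa$, and your explicit attention to the range of the reflected second-order quantifier matches what the paper's definition of $\Pi^1_1$-subcompactness already guarantees.
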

\begin{proof}
Let $\mathcal{F}$ be a $\kappa$-complete filter on $\lambda$. Assume that there is no $\kappa$-complete ultrafilter $\mathcal{U}$ extending $\mathcal{F}$. This is a $\Pi^1_1$-statement in $H(\lambda^+)$ (with parameter $\mathcal{F}$). Let us denote this statement by $\Phi$.

Thus, by the $\lambda^+$-$\Pi^1_1$-subcompactness, there is $\rho < \bar{\lambda} < \kappa$ and an elementary embedding:
\[j \colon \langle H(\bar{\lambda}^{+}), \in \bar{\mathcal{F}}\rangle \to \langle H(\lambda^{+}), \in, \mathcal{F}\rangle\] 
such that there is no $\rho$-complete ultrafilter on $\power(\bar{\lambda})$ extending $\bar{\mathcal{F}}$. 

Let us look at $j\image \bar{\mathcal{F}}$. This is a subset of $\mathcal{F}$ of size $2^{\bar{\lambda}} < \kappa$. Thus, there is an element $s \in \bigcap_{A\in \bar{\mathcal{F}}} j(A)$. Let $\bar{\mathcal{U}}$ be the measure generated by $s$, namely, $X \in \bar{\mathcal{U}}$ iff $s \in j(X)$. 

This measure is $\rho$-complete, as for every sequence of $<\rho$ sets $\langle X_i \mid i < \eta < \rho\rangle$, this sequence is a member of $H(\bar{\lambda}^+)$ and thus one can apply $j$ on it and get:
\[s \in \bigcap_{i < \eta} j(X_i) = j\left(\bigcap_{i < \eta} X_i\right).\]
We conclude that $\bigcap_{i<\eta} X_i \in \bar{\mathcal{U}}$, so $\bar{\mathcal{U}}$ is a $\rho$-complete ultrafilter which extends $\bar{\mathcal{F}}$, which is a contradiction to the assumption that $H(\rho^+)$ satisfies $\Phi$. 
\end{proof}

By standard reflection arguments (originated from \cite{Magidor1971}), if $\kappa$ is $\lambda$-supercompact then it is $\lambda$-$\Pi^1_1$-subcompact. Moreover, if $\lambda = \kappa^{+\alpha}$ for $\alpha < \kappa$, then the set of all $\rho < \kappa$ such that $\rho$ is $\rho^{+\alpha}$-$\Pi^1_1$-subcompact belongs to the normal measure on $\kappa$ which is derived from any $\lambda$-supercompact embedding. 

Let $V$ be a model of $\GCH$ and level-by-level equivalence of strong compactness and supercompactness, as in \cite{ApterShelah97}, and let us assume that there is a cardinal $\kappa$ which is $\kappa^+$-strongly compact in $V$. Then in $V$ there are many cardinals $\rho < \kappa$ in which are $\rho$-compact and not $\rho^+$-strongly compact.  

By \cite[Theorem 4.6]{NeemanSteelSubcompact}, if there is a weakly iterable premouse such that $\left(\kappa^{+}\right)^V = \left(\delta^{+}\right)^{\mathcal{Q}}$ for some cardinal $\delta$ and $\square(\kappa^+)$ fails in $V$, then $\delta$ is $\delta^+$-$\Pi^1_1$-subcompact in $\mathcal{Q}$. In particular, if $\mathcal{Q}$ is a weakly iterable premouse such that $\mathcal{Q}\models$``$\kappa$ is $\kappa$-compact'' then $\mathcal{Q}\models \kappa$ is $\kappa^+$-$\Pi^1_1$-subcompact.
  
Thus, it is natural to conjecture: 
\begin{conjecture}
The existence of $\kappa$ which is $\kappa$-compact is equiconsistent with the existence of a cardinal $\delta$ which is $\delta^+$-$\Pi^1_1$-subcompact.
\end{conjecture}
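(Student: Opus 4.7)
The conjecture is an equiconsistency, and the preceding theorem already handles the upward direction: if $\delta$ is $\delta^+$-$\Pi^1_1$-subcompact then $\delta$ has the $\delta$-filter extension property, so any model with such a $\delta$ witnesses the consistency of a $\delta$-compact cardinal. The content of the conjecture is thus the reverse implication: from a $\kappa$ which is $\kappa$-compact in $V$, produce an inner model in which some cardinal $\delta$ is $\delta^+$-$\Pi^1_1$-subcompact.

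My plan for that direction follows the strategy implicit in the paragraph preceding the conjecture. First, extract the combinatorial consequences of $\kappa$-compactness from Corollary \ref{cor: reflections}: both $\square(\kappa)$ and $\square(\kappa^+)$ fail in $V$. Second, apply the Neeman--Steel result cited in the paper: whenever there exists a weakly iterable premouse $\mathcal{Q}$ with $(\kappa^+)^V = (\delta^+)^{\mathcal{Q}}$ for some $\delta$, the failure of $\square(\kappa^+)$ in $V$ forces $\delta$ to be $\delta^+$-$\Pi^1_1$-subcompact inside $\mathcal{Q}$. The whole task thus reduces to constructing such a $\mathcal{Q}$ at precisely the right level.

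The main obstacle is the production of $\mathcal{Q}$. The natural approach is a core model induction: assume toward contradiction that no inner model with a $\delta^+$-$\Pi^1_1$-subcompact cardinal exists, build a suitable core model $K$ under that anti-large-cardinal hypothesis, and derive a contradiction by exploiting the elementary embeddings $j \colon M \to N$ from Lemma \ref{lemma: elementary embedding}, applied to transitive $M$ of size $2^\kappa$ containing long initial segments of $K$. One would want to contradict either weak covering or the successor computation $(\kappa^+)^K = (\kappa^+)^V$, in the spirit of the Schimmerling--Zeman derivation of subcompactness from square failures in $K$ and of the Jensen stacking mice argument already underlying Corollary \ref{cor:consistency strength}. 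The real difficulty is that current fine-structural inner model theory has not yet been carried out at the level of $\Pi^2_1$-subcompact cardinals with full background certification and iterability; closing this gap in the inner model program is exactly what separates the conjecture from a theorem, which is why I expect any actual proof to require substantial new input from inner model theory rather than from the set-theoretic side developed in this paper.
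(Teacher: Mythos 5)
This statement is labeled as a conjecture in the paper precisely because no proof is known, and your proposal does not close that gap; it is a (well-informed) research plan rather than a proof. The direction you call ``upward'' is indeed already a theorem of the paper: the result immediately preceding the conjecture shows that a $\delta^+$-$\Pi^1_1$-subcompact cardinal $\delta$ has the $\delta$-filter extension property, so one consistency implication is settled. Your account of the other direction correctly mirrors the paper's own motivating discussion: the failure of $\square(\kappa^+)$ from Corollary \ref{cor: reflections}, fed into the Neeman--Steel theorem, yields $\delta^+$-$\Pi^1_1$-subcompactness of $\delta$ in any weakly iterable premouse $\mathcal{Q}$ with $(\kappa^+)^V=(\delta^+)^{\mathcal{Q}}$. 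That is exactly the heuristic evidence the author offers for the conjecture, not a proof of it.

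The genuine gap is the one you yourself name: producing such a $\mathcal{Q}$. The Neeman--Steel result is conditional on already having a weakly iterable premouse computing $\kappa^+$ correctly, and constructing core models with that property at the level of $\Pi^2_1$-subcompact cardinals (in particular, establishing weak covering and iterability there without assuming the conclusion) is an open problem in inner model theory. A core model induction as you sketch it cannot currently be carried out at this level, and no argument in this paper supplies the missing machinery; Corollary \ref{cor:consistency strength} only reaches the much weaker lower bound of an inner model with a proper class of Woodin and strong cardinals via \cite{Jensen2009StackingMice}. So your proposal should not be read as a proof of the statement: it is an accurate restatement of why the statement is a conjecture, together with the standard strategy one would hope to implement once the inner-model-theoretic obstruction is removed.
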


\section{Acknowledgments}
I would like to thank Moti Gitik and Menachem Magidor for helpful discussions. I would like to thank Specer Unger and Assaf Rinot for reviewing early drafts of the paper and making many helpful suggestions.  

I Would like to thank Stevo \Todorcevic for his helpful suggestions.

Finally, I would like to thank the anonymous referee for the comments and helpful suggestions on early draft of the paper.
\providecommand{\bysame}{\leavevmode\hbox to3em{\hrulefill}\thinspace}
\providecommand{\MR}{\relax\ifhmode\unskip\space\fi MR }
\providecommand{\MRhref}[2]{%
  \href{http://www.ams.org/mathscinet-getitem?mr=#1}{#2}
}
\providecommand{\href}[2]{#2}

\end{document}